\documentclass[12pt, reqno]{amsart}
\usepackage{amsmath, amssymb, amsthm, fullpage}

\newtheorem{thm}{Theorem}

\newtheorem{lemma}[thm]{Lemma}
\newtheorem{theorem}[thm]{Theorem}

\numberwithin{equation}{section}

\theoremstyle{definition}
\newtheorem{rem}[thm]{Remark}

\newcommand{\al}{\alpha}
\renewcommand{\b}{\beta}

\newcommand{\e}{\varepsilon}
\newcommand{\la}{\lambda}
\renewcommand{\phi}{\varphi}


\newcommand{\R}{{\mathbb R}}
\newcommand{\br}[1]{\left\langle #1 \right\rangle}

\newcommand{\Step}[1]{\noindent \textbf{Step #1}}
\newcommand{\so}{\quad\Longrightarrow\quad}
\newcommand{\nequiv}{\mathrel{\setbox0\hbox{$\equiv$}%
                     \rlap{\hbox{$\equiv$}}\hbox to \wd0{\hfil $/$\hfil}}}

\renewcommand{\qed}{\rule{3mm}{3mm}}
\renewenvironment{proof}
    {\vspace{1mm}\noindent\textbf{Proof.}}
    {\hspace*{\fill} $\qed$\vspace{1mm}}
\newenvironment{proof_of}[1]
    {\vspace{1mm}\noindent {\bf Proof of #1.}}
    {\hspace*{\fill} $\qed$\vspace{1mm}}

\begin{document}
\title[Existence of ground states]{Existence of ground states for fourth-order wave equations}
\author{Paschalis Karageorgis}
\address{School of Mathematics, Trinity College, Dublin 2, Ireland}
\email{pete@maths.tcd.ie}
\author{P.J. McKenna}
\address{University of Connecticut, Storrs, CT 06269}
\email{mckenna@math.uconn.edu}

\begin{abstract}
Focusing on the fourth-order wave equation $u_{tt} + \Delta^2 u + f(u)= 0$, we prove the existence of ground state solutions $u=
u(x+ct)$ for an optimal range of speeds $c\in\R^n$ and a variety of nonlinearities $f$.

\end{abstract}
\maketitle

\section{Introduction}
There is now a substantial literature on traveling waves in nonlinearly supported beams.  The first of these papers \cite{mckwa} was
inspired by an old report of the existence of traveling waves on the Golden Gate Bridge in San Francisco \cite{tacoma}.  There are now
results on three main types of restoring forces for the equation
\begin{equation}
u_{tt} + u_{xxxx} +f(u) =0, \quad\quad x\in\R.
\end{equation}

The first type of nonlinearity that was studied was a piecewise linear one, reflecting the fact that when cables loose tension, they
do not resist compression; this led in \cite{mckwa} to the study of the equation
\begin{equation}\label{pdebeam}
u_{tt}+u_{xxxx} + u^+ =1, \quad\quad x\in\R.
\end{equation}
Solutions of the form $u(x,t)= 1+y(x-ct)$ were found by reducing the partial differential equation (\ref{pdebeam}) to the ordinary
differential equation on the real line
\begin{equation*}
y^{iv}+c^2y''+(1+y)^+=1
\end{equation*}
and then solving explicitly the two linear equations $y^{iv}+c^2y''+y=0$, where $y\geq -1$, and $y^{iv}+c^2y''=1$, where $y\leq -1$.
Solutions of both equations were constructed which matched at the boundary $y=-1$ and which tended to zero exponentially as $|x|\to
\infty$ by showing that solutions corresponded to zeros of a certain transcendental function. These zeros were then found numerically
when $0<c<\sqrt2$.

This paper had some shortcomings: first, it was technically not a proof, since no account was taken of factors like roundoff when
finding the zeros of the transcendental function. Second, it did not prove existence in the natural range of wave speeds, namely $0<
c< \sqrt{2} $. Third, there was no effort in the paper to get information on the stability of the many solutions that were calculated.
The first flaw was remedied in \cite{CheMcK}, where a rigorous proof of the existence of solutions of \eqref{pdebeam} was given using
the mountain pass theorem and the method of concentrated compactness.

The second type of nonlinearity that was studied was a "smoothed version" of the piecewise linear one, namely $f(u)= e^u-1$. The
reason for this substitution was that the piecewise linear nonlinearity was not amenable to accurate solving of the initial value
problem in order to determine stability properties of the waves shown or calculated to exist.  This substitution led to the discovery
of a large class of traveling wave solutions with extraordinary interaction and fission properties which remain unexplained to date
\cite{CheMcK, homck}. Initially, solutions were calculated using the mountain pass algorithm \cite{choimck}, however later it became
clear that shooting methods were a faster and more efficient substitute \cite{chamckze}.

Although the substitution of the new nonlinearity gave rise to some beautiful numerical results, it also introduced a new problem; the
existence of homoclinic solutions of
\begin{equation} \label{beam}
y^{iv}+c^2y^{\prime\prime}+ e^y-1=0
\end{equation}
was not apparent and was left as an open problem. While the mountain pass algorithm converged to traveling wave solutions, key
estimates in the proof for the piecewise smooth nonlinearity did not go through.  In fact, the existence of solutions of \eqref{beam}
still remains open, even though there has been some progress.  Smets and van den Berg \cite{smva} showed that for {\it almost all} $c$
in the interval $(0,\sqrt{2})$, there exists at least one solution.  Later, in \cite{plum}, the particular case $c=1.3$ was studied
and at least $36$ solutions were shown to exist using a computer-assisted proof.

The third type of nonlinearity was studied by Levandosky \cite{lev2,lev1} who focused on the case $f(u)=u-|u|^{q-1}u$ under suitable
restrictions on $q$.  Here, the $n$-dimensional problem
\begin{equation*}
u_{tt}+ \Delta^2 u +u = |u|^{q-1}u
\end{equation*}
was considered for any dimension $n\geq 1$ and \textit{ground state} solutions were shown to exist by virtue of a constrained
minimization technique.  This was the first higher-dimensional result and it also provided information on the stability of traveling
waves; some higher-dimensional numerical results were obtained in \cite{homck} for the exponential type nonlinearity.

In this paper, we give a new approach which is inspired by recent work of Liu \cite{Yue} on the Ostrovsky equation.  More precisely,
we adopt the Nehari manifold approach \cite{Willem}, commonly used for second-order problems, and we construct ground state solutions
of
\begin{equation} \label{4}
u_{tt} + \Delta^2 u + u + f(u)=0, \quad\quad x\in\R^n
\end{equation}
for a large class of nonlinearities $f$.  The second-order analogue of \eqref{4} is easier because the corresponding ground states are
radial, so the compactness lemma of Strauss \cite{Walter} becomes applicable; unfortunately, this is no longer the case for the
fourth-order problem.  To prove existence of ground states for \eqref{4}, we shall now extend the method of Liu \cite{Yue} and treat a
general class of polynomially growing nonlinear terms.

Our precise assumptions for the nonlinear term $f$ in \eqref{4} are the following.
\begin{itemize}
\item[(A1)]
$f$ is differentiable almost everywhere with
\begin{equation}\label{grow}
\liminf\limits_{u\to-\infty} f(u) +u > -\infty.
\end{equation}
\item[(A2)]
There exist constants $C_1>0$, $p\geq 1$ and $q>1$ such that
\begin{alignat}{2}
|f(u)-f(v)| &\leq C_1(|u|+|v|)^{p-1} \,|u-v| \quad &&\text{for all $u,v\in\R$;} \label{f1} \\
|f(u)|&\leq C_1|u|^q \quad &&\text{for all $u\in\R$.} \label{f2}
\end{alignat}
If $n>4$, then we also assume $p,q<\frac{n+4}{n-4}$ so that $H^2\subset L^{p+1}\cap L^{q+1}$ in any case.
\item[(A3)]
There exists $\mu\geq 2$ such that $G(u)= \int_0^u f(s)\,ds -\frac{1}{\mu} \,uf(u)$ is non-negative and convex.  If $\mu=2$, then we
also assume $G(u)\geq C_2|f(u)|$ for some $C_2>0$ and all $u\in\R$.
\item[(A4)]
$u^2f'(u) - uf(u)\leq 0$ for almost all $u\in\R$ with equality if and only if $f(u)=0$.
\end{itemize}

\begin{rem}
Our assumption (A4) holds for any function $f$ which is convex for $u<0$ and concave for $u>0$, as long as the convexity/concavity is
strict at points where $f$ is nonzero.  Our assumption (A3) is a variant of the Ambrosetti-Rabinowitz condition \cite{AR} which arises
in the study of the second-order analogue of \eqref{4} but requires that $\mu>2$.
\end{rem}

\begin{rem}
Two typical functions which satisfy all our assumptions are
\begin{equation}\label{exa}
f(u)= -|u|^{q-1}u, \quad\quad f(u) = -\min(u+1,0),
\end{equation}
where $1<q<\frac{n+4}{n-4}$.  In the former case, (A3) holds with $\mu= q+1>2$ and $G(u)$ is identically zero, so our analysis
simplifies quite a bit; this case was studied in \cite{lev2, lev1}.  The piecewise linear case was studied in \cite{CheMcK, mckwa} and
is surprisingly more difficult; in that case, (A3) holds with $\mu=2$ and the functional \eqref{Jc} we need to minimize no longer
bounds the $H^2$-norm.  In fact, we can only treat the case $\mu=2$ in the physically relevant dimensions $n<4$.
\end{rem}

The main result of this paper can now be stated as follows.

\begin{theorem}\label{exi}
Assume (A1)-(A4) and that $0<|c|<\sqrt2$.
\begin{itemize}
\item[(a)]
If $\mu>2$, then equation \eqref{4} has ground state solutions for any $n\geq 1$.
\item[(b)]
If $\mu=2$, then equation \eqref{4} has ground state solutions when $n=1,2,3$.
\end{itemize}
\end{theorem}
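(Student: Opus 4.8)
The plan is to realize the ground states as minimizers of the energy functional associated with \eqref{4} over the Nehari manifold, recovering compactness of minimizing sequences through the concentration--compactness principle. Writing the profile of a traveling wave $u=u(x+ct)$ as a function on $\R^n$, equation \eqref{4} becomes the elliptic problem $(c\cdot\nabla)^2u+\Delta^2u+u+f(u)=0$, which is the Euler--Lagrange equation of
\[
J_c(u)=\tfrac12\int_{\R^n}\Bigl(|\Delta u|^2-(c\cdot\nabla u)^2+u^2\Bigr)\,dx+\int_{\R^n}F(u)\,dx,\qquad F(u)=\int_0^u f(s)\,ds .
\]
Let $Q_c(u)$ denote the quadratic part. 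On the Fourier side $Q_c$ has symbol $|\xi|^4-(c\cdot\xi)^2+1\ge|\xi|^4-|c|^2|\xi|^2+1$, whose minimum over $\xi$ equals $1-|c|^4/4$; this is positive exactly when $0<|c|<\sqrt2$, and in that range $Q_c$ is equivalent to $\|u\|_{H^2}^2$. This is precisely where the hypothesis $0<|c|<\sqrt2$ enters, and it explains why $\sqrt2$ is the optimal threshold. I would then put $K_c(u)=\br{J_c'(u),u}=Q_c(u)+\int f(u)u\,dx$, define $\mathcal N=\{u\ne0:K_c(u)=0\}$, and seek $m=\inf_{\mathcal N}J_c$.

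Second, I would show the problem is well posed. For each $u\ne0$ the fiber map $t\mapsto K_c(tu)$ must vanish at a unique $t=t(u)>0$: a direct computation gives $\frac{d}{dt}\bigl(t^{-2}K_c(tu)\bigr)=t^{-3}\int\bigl(w^2f'(w)-wf(w)\bigr)\,dx$ with $w=tu$, which is $\le0$ by (A4), so $t^{-2}K_c(tu)$ decreases strictly from $Q_c(u)>0$; together with the growth bound \eqref{f2} this yields the unique projection onto $\mathcal N$ and the description $m=\inf_{u\ne0}\max_{t>0}J_c(tu)$. On $\mathcal N$ one has $Q_c(u)=-\int f(u)u\le C\int|u|^{q+1}\le C\,Q_c(u)^{(q+1)/2}$ by \eqref{f2} and Sobolev embedding; since $q+1>2$ this forces $Q_c(u)\ge c_0>0$ uniformly, hence $m>0$. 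Using the constraint to write $J_c=(\tfrac12-\tfrac1\mu)Q_c+\int G(u)$ on $\mathcal N$, with $G$ as in (A3), shows $J_c\ge0$ there and, when $\mu>2$, that $J_c$ controls $Q_c$ and therefore the $H^2$-norm.

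Third, I would take a minimizing sequence $u_k\in\mathcal N$ and run the concentration--compactness alternative on the $H^2$-densities $|u_k|^2+|\nabla u_k|^2+|\Delta u_k|^2$. Boundedness in $H^2$ is immediate for $\mu>2$ from the previous identity; for $\mu=2$, where $J_c$ reduces to $\int G(u_k)$ and no longer controls $Q_c$, this is the crux and is where the restriction to $n<4$ enters, the extra hypothesis $G\ge C_2|f|$ being used to bound $\int|f(u_k)|$ and, through the sharper Sobolev embeddings available in low dimensions, the $H^2$-norm itself. Vanishing is excluded because it would force $\int|u_k|^{q+1}\to0$ (subcriticality of $q+1$ being guaranteed by (A2) when $n>4$), hence $Q_c(u_k)\to0$, contradicting $Q_c\ge c_0$. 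Dichotomy is excluded by establishing the strict subadditivity of $m$, so that splitting the mass is never energetically favorable; the monotonicity of the fiber map enters again here, to compare the energies of the pieces with $m$.

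Finally, after translating the $u_k$ to prevent escape of mass, concentration--compactness yields a subsequence converging in $H^2$ to some $u\in\mathcal N$ with $J_c(u)=m$. A minimizer on $\mathcal N$ satisfies $J_c'(u)=\la K_c'(u)$; pairing with $u$ and using $K_c(u)=0$ gives $\la\,\br{K_c'(u),u}=0$, while $\br{K_c'(u),u}=\int\bigl(u^2f'(u)-uf(u)\bigr)\,dx<0$ by the strict inequality in (A4) for nontrivial $u\in\mathcal N$, so $\la=0$ and $u$ solves the profile equation. Thus $u$ is a ground state. The main obstacle is the compactness analysis of the minimizing sequence: excluding dichotomy via the strict subadditivity of $m$ in the complete absence of radial symmetry --- the term $(c\cdot\nabla)^2u$ breaks full rotation invariance, so Strauss's lemma is unavailable --- and, for $\mu=2$, securing $H^2$-boundedness at all, which is exactly what confines part (b) to $n=1,2,3$.
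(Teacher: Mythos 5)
Your skeleton (Nehari manifold, coercivity of the quadratic form for $0<|c|<\sqrt2$, Lagrange multiplier killed by (A4)) matches the paper's, and your symbol computation and final step are essentially identical to Lemma \ref{equiv} and Step 4 of the paper's proof. But there is a genuine gap at the start of your second step: the claim that for \emph{every} $u\nequiv 0$ the fiber map $t\mapsto K_c(tu)$ vanishes at a unique $t(u)>0$ is false under (A1)--(A4). Your computation via (A4) shows only that $t^{-2}K_c(tu)$ is non-increasing from $Q_c(u)>0$; it gives no reason for it to become negative, and the growth bound \eqref{f2} is an upper bound on $|f|$, so it cannot force negativity either. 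For the paper's second model nonlinearity $f(u)=-\min(u+1,0)$, which vanishes for $u\geq -1$, any $u\geq 0$ has $K_c(tu)=t^2Q_c(u)>0$ for all $t>0$, so no projection exists (and the monotonicity is not strict either, since equality holds in (A4) wherever $f=0$). Consequently your formula $m=\inf_u\max_t J_c(tu)$ fails, and --- more seriously --- your exclusion of dichotomy, which needs to project both pieces of a split minimizing sequence back onto $\mathcal N$, breaks down for the piece whose $K_c$ is positive. The paper avoids this entirely: it proves non-emptiness of the constraint set by an explicit construction using (A1) and a negative Gaussian (Lemma \ref{usp}), and it relaxes the constraint to $P_c(z)\leq 0$ while minimizing $J_c=I_c-\mu^{-1}P_c$ (Lemma \ref{2min}); on that set a projection with $t\in(0,1]$ always exists by the small-$t$ positivity from (A2), which is all that is ever needed.

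The second soft spot is your treatment of dichotomy via strict subadditivity of $m$, which you assert but do not prove; for non-homogeneous $f$ (again the piecewise-linear example) this is one of the delicate points, and in the borderline case $\mu=2$ the usual argument (each piece contributes at least $\frac{\mu-2}{2\mu}Q_c>0$ beyond $m$) degenerates. The paper's route sidesteps the trichotomy altogether: it extracts a nonzero weak limit after translation using the Fr\'ohlich--Lieb--Loss measure estimate together with Lieb's compactness lemma, then uses a Brezis--Lieb-type splitting of $P_c$ and $J_c$ (Step 2) to show directly that if the weak limit $z$ had $P_c(z)>0$ then the tails $z_k-z$ would themselves form a minimizing sequence with $P_c\leq 0$, a contradiction; weak lower semicontinuity of $J_c$ then finishes the job. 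If you want to salvage your outline, the cleanest fix is to adopt exactly this relaxation of the constraint, since it converts every place where you need a projection into a place where $P_c\leq 0$ already holds.
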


In section \ref{mipr}, we introduce the action \eqref{Ic} whose critical points correspond to traveling wave solutions of \eqref{4}
and we set up a minimization problem over the associated Nehari manifold \eqref{dc2}.   The proof of Theorem \ref{exi} is given in
section \ref{pomr}, where we also characterize the ground states in terms of the action; see Theorem \ref{new}.

\section{The minimization problem}\label{mipr}
In this section, we look for traveling wave solutions of the fourth-order equation
\begin{equation*}
u_{tt} + \Delta^2 u + u + f(u)=0, \quad\quad x\in\R^n.
\end{equation*}
To say that $u(x,t)= \phi(x+ct)$ is a solution for some $c\in\R^n$ is to say that
\begin{equation}\label{pde}
\Delta^2 \phi + \sum_{i,j=1}^n c_i c_j \phi_{x_ix_j} + \phi + f(\phi) = 0.
\end{equation}
To construct solutions of \eqref{pde}, we shall now look for critical points of the functional
\begin{equation}\label{Ic}
I_c(z) = \frac{1}{2} \int_{\R^n} (\Delta z)^2 - (c\cdot \nabla z)^2 + z^2 \:dx + \int_{\R^n} F(z) \:dx,
\end{equation}
where $F(u)= \int_0^u f(s)\,ds$.  Since the Fr\'echet derivative of this functional is given by
\begin{equation*}
\br{I_c'(z),\phi} = \int_{\R^n} \Delta z \Delta\phi - (c\cdot \nabla z)(c\cdot \nabla \phi) + z\phi \:dx + \int_{\R^n}
f(z)\phi \:dx,
\end{equation*}
it is clear that every critical point of $I_c$ is also a root of
\begin{equation}\label{Pc}
P_c(z) = \br{I_c'(z),z} = \int_{\R^n} (\Delta z)^2 - (c\cdot \nabla z)^2 + z^2 \:dx + \int_{\R^n} zf(z) \:dx.
\end{equation}
Our goal is to reduce the existence of solutions of \eqref{pde} to the existence of minimizers for
\begin{equation}\label{dc2}
d_c = \inf \{ I_c(z) : 0\nequiv z\in H^2(\R^n), \quad P_c(z)=0\}.
\end{equation}

\begin{lemma}\label{equiv}
If $0< |c|<\sqrt2$, then the $H^2$-norm is equivalent to the norm $||\cdot||$ defined by
\begin{equation}\label{norm}
||u||^2 = \int_{\R^n} (\Delta u)^2 - (c\cdot \nabla u)^2 + u^2 \:dx.
\end{equation}
\end{lemma}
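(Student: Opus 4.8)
The plan is to pass to the Fourier transform and thereby reduce the claimed equivalence to an elementary comparison of two symbols. Writing $\widehat{u}$ for the Fourier transform of $u$ and invoking Plancherel's theorem, one has $\Delta u \mapsto -|\xi|^2\widehat{u}$ and $c\cdot\nabla u \mapsto i(c\cdot\xi)\widehat{u}$, so that
\begin{equation*}
||u||^2 = \int_{\R^n} \big( |\xi|^4 - (c\cdot\xi)^2 + 1 \big)\,|\widehat{u}(\xi)|^2 \:d\xi,
\end{equation*}
while the $H^2$-norm is comparable to $\int_{\R^n} (1+|\xi|^2)^2\,|\widehat{u}(\xi)|^2 \:d\xi$. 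Thus it suffices to produce constants $0<A\leq B<\infty$ such that
\begin{equation*}
A\,(1+|\xi|^2)^2 \leq |\xi|^4 - (c\cdot\xi)^2 + 1 \leq B\,(1+|\xi|^2)^2 \quad\text{for all } \xi\in\R^n,
\end{equation*}
since integrating against $|\widehat{u}|^2$ then transfers the pointwise bounds to the two norms on all of $H^2(\R^n)$.

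The upper bound is immediate: dropping the nonpositive term $-(c\cdot\xi)^2$ gives $|\xi|^4-(c\cdot\xi)^2+1 \leq |\xi|^4+1 \leq (1+|\xi|^2)^2$, so $B=1$ works regardless of $c$. For the lower bound I would first apply Cauchy--Schwarz in the form $(c\cdot\xi)^2 \leq |c|^2|\xi|^2$ to collapse the problem to one variable: setting $t=|\xi|^2\geq 0$, it is enough to find $A>0$ with $t^2 - |c|^2 t + 1 \geq A\,(1+t)^2$ for all $t\geq 0$. Rearranging, this asks that $(1-A)t^2 - (|c|^2+2A)\,t + (1-A)\geq 0$ on $[0,\infty)$, which holds provided $A<1$ and the discriminant is nonpositive, that is, $|c|^2+4A\leq 2$.

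The crux is therefore precisely the hypothesis $|c|<\sqrt2$: it is exactly the condition under which the choice $A=\frac{2-|c|^2}{4}$ is strictly positive (and $<\tfrac12<1$), making the discriminant vanish and the quadratic uniformly nonnegative. With this $A$ both symbol bounds hold and Plancherel yields the equivalence. I expect the discriminant estimate to be the only substantive step, with everything else routine bookkeeping. It is also worth recording that $|c|<\sqrt2$ is sharp for the argument: taking $\xi=c/|c|$ at $|c|=\sqrt2$ gives $|\xi|^4-(c\cdot\xi)^2+1 = 1-2+1=0$, so the symbol degenerates and $||\cdot||$ can no longer control the $H^2$-norm.
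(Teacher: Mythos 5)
Your proof is correct and complete: the Plancherel reduction, the choice $A=\frac{2-|c|^2}{4}$, and the discriminant computation all check out, and the endpoint remark at $|c|=\sqrt2$ is accurate. The paper itself only cites Chen--McKenna (Lemma 2.2) for $n=1$, and the argument there is the physical-space counterpart of your symbol estimate (the interpolation bound $\int(u')^2\leq\frac{1}{2}(\delta\int(u'')^2+\delta^{-1}\int u^2)$ obtained by integration by parts, which is exactly your AM--GM on $t\mapsto t^2-|c|^2t+1$), so your route is essentially the same, just written out in full on the Fourier side.
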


\begin{proof}
See \cite[Lemma 2.2]{CheMcK} for the case $n=1$; the general case is almost identical.
\end{proof}

\begin{lemma}\label{usp}
Assume (A1)-(A2) and that $0< |c|<\sqrt2$.  Then there exists some $0\nequiv u\in H^2$ such that $P_c(u)=0$.
\end{lemma}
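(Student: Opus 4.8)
The plan is to construct a nonzero point on the Nehari constraint by scaling a fixed test function. We want to find $u \in H^2$ with $P_c(u) = 0$, so the natural strategy is to start with any $0 \nequiv w \in H^2$ and look for a scalar $t > 0$ such that $P_c(tw) = 0$. Writing out $P_c(tw)$ using \eqref{Pc}, the quadratic part scales as $t^2 \|w\|^2$ (using the norm from Lemma \ref{equiv}), while the nonlinear part is $\int_{\R^n} tw \, f(tw) \, dx$.

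The key step is to understand the behavior of the function $g(t) = P_c(tw)$ as $t$ ranges over $(0,\infty)$. For small $t$, I expect the quadratic term $t^2 \|w\|^2$ to dominate, so $g(t) > 0$ for $t$ near $0$; here the growth bound \eqref{f2} is what I would use, since $|\int tw\, f(tw)| \leq C_1 t^{q+1} \int |w|^{q+1}$ with $q > 1$ makes the nonlinear contribution lower order as $t \to 0^+$. The embedding $H^2 \subset L^{q+1}$ from (A2) guarantees this integral is finite. For large $t$, I need $g(t)$ to become negative, which is where I expect the main obstacle to lie: the nonlinear term $\int tw\, f(tw)$ must eventually overcome the quadratic growth $t^2\|w\|^2$. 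This forces me to choose $w$ carefully, exploiting the sign and growth properties of $f$.

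The choice of $w$ is the delicate part. Assumption (A1), namely $\liminf_{u\to-\infty} (f(u)+u) > -\infty$, suggests that $f(u)$ behaves roughly like $-u$ (or better) as $u \to -\infty$, so that $u f(u) \approx -u^2$ there, making $u f(u) + u^2$ bounded below but the combination in $P_c$ potentially negative for large negative arguments. I would therefore take $w \leq 0$ to be a suitable bump supported where scaling drives $tw \to -\infty$, so that on the support of $w$ the integrand $(\Delta w)^2 - (c\cdot\nabla w)^2 + w^2 + w f(tw)/t$ becomes negative once $t$ is large. Concretely, using (A1) to write $f(tw) \geq -tw - M$ for some constant $M$ on the relevant range, the term $\int tw\, f(tw)$ is bounded above by $-t^2\int w^2 + Mt\int|w|$, which combined with the $t^2$-part of $\|w\|^2$ leaves $t^2\int[(\Delta w)^2 - (c\cdot\nabla w)^2]$ plus lower-order terms. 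I would then arrange $w$ so that $\int[(\Delta w)^2 - (c\cdot\nabla w)^2] < 0$, forcing $g(t) \to -\infty$; this is exactly where $|c| < \sqrt 2$ and the indefinite structure of the norm create room, since a function oscillating in the direction of $c$ can make $\int (c\cdot\nabla w)^2$ large relative to $\int(\Delta w)^2$.

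Once I have $g(t_0) > 0$ for small $t_0$ and $g(t_1) < 0$ for large $t_1$, continuity of $g$ (which follows from the Lipschitz bound \eqref{f1} and dominated convergence) yields a root $t_* \in (t_0, t_1)$ by the intermediate value theorem. Then $u = t_* w$ is the desired nonzero element of $H^2$ with $P_c(u) = 0$. The main obstacle, as noted, is verifying that a test function with $\int[(\Delta w)^2 - (c\cdot\nabla w)^2] < 0$ exists in $H^2$; I expect this to be handled by taking $w$ to be a modulated bump of the form $w(x) = \chi(x)\cos(k\, c\cdot x / |c|)$ with $k$ large, or by a Fourier-side argument showing the symbol $|\xi|^4 - (c\cdot\xi)^2$ is negative on a set of positive measure when $|c| > 0$, guaranteeing such $w$ exists.
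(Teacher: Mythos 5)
Your overall strategy coincides with the paper's: scale a fixed non-positive test function $w$, show $P_c(tw)>0$ for small $t>0$ using \eqref{f2} with $q>1$ and the positive-definiteness of $\|\cdot\|$ from Lemma \ref{equiv}, show $P_c(tw)<0$ for large $t$ using (A1), and conclude by the intermediate value theorem. The small-$t$ step and the reduction of the large-$t$ step to finding $w\leq 0$ with $\int_{\R^n}(\Delta w)^2-(c\cdot\nabla w)^2\,dx<0$ are both correct and match the paper (the paper phrases the large-$t$ step as a computation of $\lim_{A\to\infty}P_c(Aw)/A^2$, but the content is the same as your upper bound $-t^2\int w^2+Mt\int|w|$, which additionally requires $w\in L^1$).

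The gap is in the one step you defer to the end, namely the construction of $w$, and both of your concrete candidates fail. First, the modulated bump $\chi(x)\cos(k\,c\cdot x/|c|)$ with $k$ \emph{large} concentrates $\hat w$ near $|\xi|=k$, where the symbol satisfies $|\xi|^4-(c\cdot\xi)^2\geq|\xi|^2\bigl(|\xi|^2-|c|^2\bigr)>0$; high frequencies make the quadratic form positive, not negative. The symbol is negative only on the low-frequency region $0<|\xi|^2<|c\cdot\xi|$, so you would need $0<k<|c|$. Second, and more seriously, both the modulated cosine and any band-limited Fourier-side construction produce a sign-changing $w$, which is incompatible with the requirement $w\leq 0$ that you yourself correctly identified earlier: on the set where $w>0$ the rescaled argument $tw\to+\infty$, where (A1) gives no information and \eqref{f2} only yields a contribution of size $t^{q+1}$ with indeterminate sign, which can overwhelm the $t^2$ terms. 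So as written the large-$t$ step would break for either candidate. The paper resolves this tension by taking $w(x)=-e^{-\alpha|x|^2}$ with $\alpha>0$ small: this is non-positive and integrable, and a direct computation gives $\int_{\R^n}(\Delta w)^2-(c\cdot\nabla w)^2\,dx=\alpha\bigl(n(n+2)\alpha-|c|^2\bigr)\bigl(\tfrac{\pi}{2\alpha}\bigr)^{n/2}$, which is negative once $\alpha<|c|^2/(n(n+2))$. (On the Fourier side this works because $\hat w$ concentrates at low frequencies, where the symbol is approximately $-(c\cdot\xi)^2\leq 0$.) With this choice of $w$ substituted in, your argument goes through.
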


\begin{proof}
First of all, let us note that there exists some $0\nequiv w\in H^2$ such that
\begin{equation}\label{spe}
||w||^2 - ||w||_{L^2(\R^n)}^2 = \int_{\R^n} (\Delta w)^2 - (c\cdot \nabla w)^2 \:dx < 0.
\end{equation}
In fact, $w(x)= -e^{-\al |x|^2}$ is such for all small enough $\al>0$ because
\begin{equation*}
||w||^2 - ||w||_{L^2(\R^n)}^2 =  \al \Bigl( n(n+2)\al - |c|^2 \Bigr) \cdot \left( \frac{\pi}{2\al} \right)^{n/2},
\end{equation*}
as one can easily check.  We now fix $\al>0$ so that \eqref{spe} holds and we focus on
\begin{equation}\label{Pcaw}
P_c(Aw) = A^2 ||w||^2 + \int_{\R^n} Aw f(Aw) \:dx, \quad\quad A>0.
\end{equation}
By our assumption (A2) and Sobolev embedding, the integral term is bounded by
\begin{equation*}
\left| \int_{\R^n} Aw f(Aw) \:dx \right| \leq C_1 A^{q+1} \int_{\R^n} |w(x)|^{q+1} \:dx \leq C_2A^{q+1} \,||w||^{q+1}.
\end{equation*}
Since $q>1$, this implies $P_c(Aw)>0$ for all small enough $A>0$, so it suffices to check that $P_c(Aw)<0$ for all large enough $A>0$.
Let us now use \eqref{spe} and \eqref{Pcaw} to get
\begin{align*}
\lim_{A\to\infty} \frac{P_c(Aw)}{A^2}
&= ||w||^2 -||w||_{L^2(\R^n)}^2 + \lim_{A\to\infty} \int_{\R^n} \frac{Aw f(Aw) + A^2w^2}{A^2} \:dx \\
&< \lim_{A\to\infty} \int_{\R^n} \frac{Aw f(Aw) + A^2w^2}{A^2} \:dx.
\end{align*}
According to our assumption (A1), there exist constants $u_0<0$ and $\b\in\R$ such that
\begin{equation*}
uf(u) + u^2 \leq \b u\quad\text{for all $u\leq u_0<0$.}
\end{equation*}
Since $-1\leq w(x)\leq 0$ for all $x\in\R^n$ by definition, this also implies
\begin{align*}
\lim_{A\to\infty} \frac{P_c(Aw)}{A^2}
&<\lim_{A\to\infty} \int_{-A\leq Aw\leq u_0} \frac{\b w}{A} \:dx +\int_{u_0\leq Aw\leq 0} \frac{Aw f(Aw) +A^2w^2}{A^2} \:dx.
\end{align*}
Since $w$ is integrable and $f(u)+u$ is bounded on $[u_0,0]$, the right hand side is obviously zero.  In particular, $P_c(Aw)< 0$ for
all large enough $A$ and the result follows.
\end{proof}

\begin{lemma}\label{2min}
Assume (A1)-(A3) and let
\begin{equation}\label{Jc}
J_c(z) = I_c(z) - \frac{1}{\mu} P_c(z)= \frac{\mu-2}{2\mu} \,||z||^2 + \int_{\R^n} G(z) \:dx
\end{equation}
for each $z\in H^2(\R^n)$.  Then a minimizer exists for
\begin{equation}\label{dc}
d_c = \inf \{ I_c(w) : 0\nequiv w\in H^2, \quad P_c(w)=0\},
\end{equation}
provided that a minimizer exists for
\begin{equation}\label{mc}
m_c = \inf \{ J_c(w) : 0\nequiv w\in H^2, \quad P_c(w)\leq 0\}.
\end{equation}
\end{lemma}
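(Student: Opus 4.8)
The plan is to exploit the identity $I_c(z)=J_c(z)+\frac1\mu P_c(z)$, so that $I_c$ and $J_c$ coincide on the Nehari manifold $\{P_c=0\}$. Since the constraint set $\{w\nequiv 0:P_c(w)\le 0\}$ for \eqref{mc} contains the constraint set $\{w\nequiv 0:P_c(w)=0\}$ for \eqref{dc}, and since $I_c=J_c$ on the latter, one immediately gets $m_c\le d_c$. Given a minimizer $u$ of \eqref{mc}, I would rescale it to a function $tu$ lying on $\{P_c=0\}$ and then show that this rescaled function minimizes \eqref{dc}, by establishing the chain
\[
d_c\le I_c(tu)=J_c(tu)\le J_c(u)=m_c\le d_c,
\]
which forces $I_c(tu)=d_c$. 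Here the first inequality holds because $tu$ is admissible for \eqref{dc}, the middle equality because $P_c(tu)=0$, and the last two relations come from the minimality of $u$ and the bound $m_c\le d_c$ noted above.

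Two things must be checked. First, the existence of a scaling factor $t\in(0,1]$ with $P_c(tu)=0$. From the expression $P_c(tu)=t^2\|u\|^2+\int_{\R^n}tu\,f(tu)\,dx$ and assumption (A2), the nonlinear term is $O(t^{q+1})$ with $q>1$, so $P_c(tu)>0$ for all small $t>0$; since $P_c(u)\le 0$ at $t=1$, the intermediate value theorem (using continuity of $t\mapsto P_c(tu)$, which follows from \eqref{f1}) produces the desired $t\in(0,1]$, and $tu\nequiv 0$ because $t>0$ and $u\nequiv 0$.

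Second, and this is the crux, the monotonicity $J_c(tu)\le J_c(u)$ for $t\in(0,1]$. Using the explicit form \eqref{Jc}, I would treat the two summands separately. The coefficient $\frac{\mu-2}{2\mu}$ is non-negative since $\mu\ge 2$, so the term $\frac{\mu-2}{2\mu}t^2\|u\|^2$ does not exceed $\frac{\mu-2}{2\mu}\|u\|^2$ when $t\le 1$; and for the potential term I would use that $G$ is convex, non-negative, and vanishes at $0$ (by (A3)), whence $G(tu)=G\bigl(tu+(1-t)\cdot 0\bigr)\le tG(u)\le G(u)$ pointwise, and integration gives $\int_{\R^n}G(tu)\,dx\le\int_{\R^n}G(u)\,dx$.

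I expect the main obstacle to be precisely this monotonicity step, since it is where assumption (A3) is essential: convexity together with non-negativity of $G$ is exactly what guarantees that pushing $u$ back onto the Nehari manifold does not raise the value of $J_c$. It is worth noting that the argument treats $\mu=2$ and $\mu>2$ uniformly, the distinction entering only through the sign of the coefficient $\frac{\mu-2}{2\mu}\ge 0$, and that no compactness is needed here, as the lemma merely transfers the existence of a minimizer from \eqref{mc} to \eqref{dc}.
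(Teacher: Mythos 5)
Your proposal is correct and follows essentially the same route as the paper's proof: scale the minimizer $u$ of $m_c$ back onto $\{P_c=0\}$ using the sign of $P_c(\al u)$ for small $\al$ (from (A2) with $q>1$), then use $\mu\geq 2$ together with the convexity and non-negativity of $G$ and $G(0)=0$ to get $J_c(\al u)\leq J_c(u)$, closing the chain with $m_c\leq d_c$. The only cosmetic difference is that you spell out the continuity/intermediate-value step that the paper leaves implicit.
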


\begin{proof}
Suppose that $u$ is a minimizer for $m_c$, in which case
\begin{equation*}
0\nequiv u\in H^2, \quad\quad P_c(u)\leq 0, \quad\quad J_c(u)= m_c.
\end{equation*}
We note that $P_c(\al u)>0$ for all small enough $\al>0$, as our assumption (A2) gives
\begin{align}
P_c(\al u) = \al^2 ||u||^2 + \int_{\R^n} \al uf(\al u) \:dx
&\geq \al^2 ||u||^2 - C_1\al^{q+1} ||u||_{L^{q+1}}^{q+1} \label{use1} \\
&\geq \al^2 ||u||^2 - C\al^{q+1} ||u||^{q+1} \notag
\end{align}
with $q>1$.  In particular, $P_c(\al u)=0$ for some $0<\al \leq 1$ and we easily get
\begin{align*}
d_c \leq I_c(\al u) = J_c(\al u)
&= \frac{\mu-2}{2\mu} \,\al^2 ||u||^2 + \int_{\R^n} G(\al u) \:dx.
\end{align*}
In view of our assumption (A3), $\mu\geq 2$ and $G\geq 0$ is convex with $G(0)=0$, hence
\begin{align}\label{au}
d_c \leq I_c(\al u)
&\leq \frac{\mu-2}{2\mu} \,\al^2 ||u||^2 + \al \int_{\R^n} G(u) \:dx \notag \\
&\leq \frac{\mu-2}{2\mu} \,||u||^2 + \int_{\R^n} G(u) \:dx \\
&= J_c(u) =m_c. \notag
\end{align}
Since $m_c\leq d_c$ by definitions \eqref{dc} and \eqref{mc}, this means $\al u$ is a minimizer for $d_c$.
\end{proof}

\begin{rem}
Under the assumptions of Theorem \ref{exi}, one can easily improve the conclusions of Lemma \ref{2min} in the sense that $u$ is a
minimizer for $d_c$ if and only if it is a minimizer for $m_c$.  This amounts to showing that \eqref{au} can only hold with equality
when $\al=1$.  Nevertheless, we shall not need a stronger version of this lemma in what follows.
\end{rem}

\section{Proof of our main result} \label{pomr}
In this section, we shall prove our main result, Theorem \ref{exi}.  In fact, we shall also give a characterization of ground state
solutions, as we do in the following theorem.

\begin{theorem}\label{new}
Assume (A1)-(A4) and that $0<|c|<\sqrt2$.
\begin{itemize}
\item[(a)]
If $\mu>2$, then equation \eqref{4} has ground state solutions for any $n\geq 1$.
\item[(b)]
If $\mu=2$, then equation \eqref{4} has ground state solutions when $n=1,2,3$.
\end{itemize}
In any case, however, $w$ is a ground state of \eqref{4} if and only if $w$ is a minimizer for $d_c$.
\end{theorem}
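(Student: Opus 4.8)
The plan is to produce a minimizer for the relaxed problem $m_c$ in \eqref{mc} and then invoke Lemma \ref{2min} to obtain a minimizer $w$ for $d_c$; the bulk of the work is the compactness of a minimizing sequence, after which (A4) upgrades the minimizer to an honest solution. First I would note that the constraint set is nonempty by Lemma \ref{usp} and that $m_c\ge0$ since $J_c\ge0$, and then fix a minimizing sequence $\{u_k\}$ for $m_c$, so that $0\nequiv u_k\in H^2$, $P_c(u_k)\le0$ and $J_c(u_k)\to m_c$, and check boundedness in $H^2$. When $\mu>2$ this is immediate since $J_c(u_k)\ge\frac{\mu-2}{2\mu}\|u_k\|^2$ by \eqref{Jc}. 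When $\mu=2$ the quadratic term drops out, and here I would use the two extra hypotheses of case (b): in dimensions $n=1,2,3$ one has $H^2\hookrightarrow L^\infty$, while $G\ge C_2|f|$ bounds $\|f(u_k)\|_{L^1}$ by $C_2^{-1}J_c(u_k)$; combining these with the constraint $\|u_k\|^2\le\int_{\R^n}|u_kf(u_k)|\,dx\le\|u_k\|_{L^\infty}\|f(u_k)\|_{L^1}$ yields a uniform bound on $\|u_k\|$.

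Next I would rule out vanishing. The estimate behind \eqref{use1}, namely $P_c(z)\ge\|z\|^2-C\|z\|^{q+1}$ from (A2), shows that on the constraint set $\{0\nequiv z,\ P_c(z)\le0\}$ one has a uniform lower bound $\|z\|\ge\delta>0$. If the sequence were to vanish in the sense of Lions, then $\|u_k\|_{L^{q+1}}\to0$, hence $\int_{\R^n}|u_kf(u_k)|\,dx\le C\|u_k\|_{L^{q+1}}^{q+1}\to0$ by \eqref{f2}, and the constraint would force $\|u_k\|\to0$, contradicting the lower bound. Therefore vanishing fails, and after translating by a suitable sequence $y_k$ (using that $I_c$, $P_c$, $J_c$ and $\|\cdot\|$ are translation invariant) the sequence $\tilde u_k=u_k(\cdot+y_k)$ has a weak limit $\tilde u_k\rightharpoonup u\not\equiv0$ in $H^2$, with $\tilde u_k\to u$ a.e.\ and in $L^s_{\mathrm{loc}}$.

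The main obstacle is to pass the constraint to the limit, i.e.\ to show $P_c(u)\le0$; this is exactly the point where the lack of radial symmetry (hence of Strauss-type compactness) bites, and where dichotomy must be excluded. Writing $r_k=\tilde u_k-u\rightharpoonup0$, a Brezis--Lieb splitting, valid because of the subcritical growth \eqref{f1}--\eqref{f2} and the a.e.\ convergence, gives $\|\tilde u_k\|^2=\|u\|^2+\|r_k\|^2+o(1)$, $\int_{\R^n}\tilde u_kf(\tilde u_k)\,dx=\int_{\R^n}uf(u)\,dx+\int_{\R^n}r_kf(r_k)\,dx+o(1)$, and the analogous identity for $\int G$, so that $P_c(\tilde u_k)=P_c(u)+P_c(r_k)+o(1)$ and $J_c(\tilde u_k)=J_c(u)+J_c(r_k)+o(1)$. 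Weak lower semicontinuity of $\|\cdot\|$ together with Fatou for $G\ge0$ already gives $J_c(u)\le m_c$. If $\liminf P_c(r_k)\ge0$ then $P_c(u)\le0$ and $u$ is the desired minimizer. Otherwise $r_k\not\equiv0$ and $P_c(r_k)<0$ along a subsequence, so $r_k$ is admissible for $m_c$ and $J_c(r_k)\ge m_c$; since $J_c\ge0$, the splitting forces $J_c(u)=0$. For $\mu>2$ this means $u\equiv0$, contradicting $u\not\equiv0$, so this branch is impossible and case (a) is complete. For $\mu=2$ the vanishing of $J_c(u)$ only forces $f(u)=0$ a.e., and ruling out this residual escape of mass is the genuinely delicate step; it is here that the bound $G\ge C_2|f|$ and the embedding $H^2\hookrightarrow L^\infty$ (i.e.\ $n\le3$) are used to recover strong $L^{q+1}$-convergence and close case (b). In all cases $u$ minimizes $m_c$, and Lemma \ref{2min} produces a minimizer for $d_c$.

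Finally I would show a minimizer $w$ of $d_c$ solves \eqref{pde} and identify the ground states. Since $w$ minimizes $I_c$ on $\{P_c=0\}$, the Lagrange multiplier rule gives $I_c'(w)=\lambda P_c'(w)$; pairing with $w$ and using $P_c(w)=\br{I_c'(w),w}=0$ yields $\lambda\br{P_c'(w),w}=0$. A direct computation gives $\br{P_c'(w),w}=\|w\|^2+\int_{\R^n}w^2f'(w)\,dx$, and by (A4) one has $w^2f'(w)\le wf(w)$ pointwise, so $\int_{\R^n}w^2f'(w)\,dx\le\int_{\R^n}wf(w)\,dx=-\|w\|^2$; hence $\br{P_c'(w),w}\le0$, with equality only if $f(w)=0$ a.e., which would give $\|w\|^2=-\int_{\R^n}wf(w)\,dx=0$ and contradict $w\not\equiv0$. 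Thus $\br{P_c'(w),w}<0$, forcing $\lambda=0$, so $I_c'(w)=0$ and $w$ is a weak, hence, by elliptic regularity for the coercive operator $\Delta^2+\sum_{i,j=1}^n c_ic_j\d_{x_ix_j}+1$, classical solution of \eqref{pde}. For the characterization, note every nontrivial solution $v$ of \eqref{pde} satisfies $P_c(v)=\br{I_c'(v),v}=0$ and is therefore admissible for $d_c$, so $I_c(v)\ge d_c$; since $d_c$ is attained by a solution, $d_c$ is precisely the least action among nontrivial solutions. Consequently $w$ is a ground state if and only if $I_c(w)=d_c$, i.e.\ if and only if $w$ minimizes $d_c$. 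I expect the compactness step of the third paragraph, excluding dichotomy without symmetry and especially the borderline case $\mu=2$, to be the main difficulty.
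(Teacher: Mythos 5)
Your overall strategy coincides with the paper's: minimize $J_c$ over $\{P_c\le 0\}$ as in \eqref{mc}, prove boundedness of a minimizing sequence (with the $\mu=2$, $n\le 3$ hypotheses entering exactly as you use them), extract a nonzero translated weak limit, split $P_c$ and $J_c$ \`a la Brezis--Lieb, pass the constraint to the limit, and finish with the Lagrange multiplier rule plus (A4). Your multiplier computation and the equivalence ``ground state $\Leftrightarrow$ minimizer for $d_c$'' match the paper's Steps 4--5 (the paper writes $\br{P_c'(w),w}=\int_{\R^n} w^2f'(w)-wf(w)\,dx$, which equals your $\|w\|^2+\int_{\R^n} w^2f'(w)\,dx$ on the constraint set), and your use of Lions' vanishing lemma in place of the paper's Fr\"ohlich--Lieb--Loss and Lieb lemmas is an immaterial difference. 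The one substantive deviation is in the dichotomy-exclusion step, and that is where your proposal has a genuine gap.

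In your third paragraph you reduce the bad branch to $J_c(u)=0$, dispose of $\mu>2$ immediately, and then for $\mu=2$ you declare that ``ruling out this residual escape of mass'' is to be handled by $G\ge C_2|f|$ and $H^2\hookrightarrow L^\infty$ --- but you never give the argument, and it is not clear that this route closes: when $\mu=2$, $J_c(u)=0$ only yields $f(u)=0$ a.e., which is perfectly consistent with $u\nequiv 0$ and $P_c(u)=\|u\|^2>0$, so no contradiction is reached at that point. The paper does not case-split on $\mu$ here at all. It assumes $P_c(z)>0$, deduces from \eqref{lim1} that $P_c(z_k-z)<0$ for large $k$, hence that $z_k-z$ is admissible and $J_c(z_k-z)\ge m_c$; then \eqref{lim2} together with $J_c\ge 0$ forces $J_c(z)=0$ and $J_c(z_k-z)\to m_c$, so that $\{z_k-z\}$ is itself a minimizing sequence satisfying the constraint, and the paper derives its contradiction by applying the extraction argument of Step 1 to this new sequence, whose weak limit is zero. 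The structural point you miss is that the extra hypotheses of case (b) are consumed entirely in the boundedness step; the exclusion of dichotomy is uniform in $\mu$. As written, your case (a) and the final two paragraphs are essentially complete, but case (b) is not proved.
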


Recall that solutions of \eqref{4} correspond to critical points of the functional $I_c$ in \eqref{Ic}.  We shall henceforth denote by
$S_c$ the set of all nontrivial critical points, namely
\begin{equation}\label{Sc}
S_c = \{ u\in H^2: u\nequiv 0,\quad I_c'(u)=0\},
\end{equation}
and we shall also denote by $G_c$ the set of all ground states, namely
\begin{equation}\label{Gc}
G_c = \{ w\in S_c: I_c(w)\leq I_c(u) \quad\text{for all $u\in S_c$} \}.
\end{equation}
In order to prove Theorem \ref{new}, we need to make use of the following two lemmas.  We refer the reader to \cite{FrLiLo} for the
first one and \cite{Lieb} for the second; although the second was originally stated for sequences in $H^1$, the proof in \cite{Lieb}
applies verbatim to yield a similar statement for sequences in $H^2$.

\begin{lemma}
Suppose $f$ is a measurable function on $\R^n$ which satisfies
\begin{equation*}
||f||_{L^\al}\leq C_\al, \quad\quad ||f||_{L^\b} \geq C_\b, \quad\quad ||f||_{L^\gamma} \leq C_\gamma
\end{equation*}
for some $1<\al<\b<\gamma<\infty$ and some $C_\al,C_\b,C_\gamma>0$.  Then there exist some fixed constants $\e,C_0>0$ such that the
Lebesgue measure of $\{x\in\R^n: |f(x)|\geq \e\}$ is at least $C_0$.
\end{lemma}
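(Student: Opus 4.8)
The plan is to split $\R^n$ into three regions according to the size of $|f|$ and to exploit the fact that $\b$ lies strictly between $\al$ and $\gamma$. The hypothesis $||f||_{L^\b}\geq C_\b$ guarantees that the total mass $\int_{\R^n}|f|^\b\,dx$ is at least $C_\b^\b$, and the idea is to show that this mass can neither hide where $|f|$ is very small (a region controlled by $||f||_{L^\al}$) nor where $|f|$ is very large (a region controlled by $||f||_{L^\gamma}$). It must therefore concentrate on an intermediate set whose Lebesgue measure is bounded below.

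First I would estimate the small values. On the set $\{|f|<\e\}$ the inequality $\b>\al$ gives $|f|^\b=|f|^{\b-\al}|f|^\al\leq \e^{\b-\al}|f|^\al$, so that
\begin{equation*}
\int_{\{|f|<\e\}}|f|^\b\,dx \leq \e^{\b-\al}\,C_\al^\al.
\end{equation*}
Since $\b-\al>0$, the right-hand side tends to zero as $\e\to 0$, and I would fix $\e>0$ small enough that this contribution is at most $\tfrac{1}{4} C_\b^\b$. Symmetrically, on the set $\{|f|>M\}$ the inequality $\b<\gamma$ makes the exponent $\b-\gamma$ negative, whence $|f|^\b=|f|^{\b-\gamma}|f|^\gamma\leq M^{\b-\gamma}|f|^\gamma$ and
\begin{equation*}
\int_{\{|f|>M\}}|f|^\b\,dx \leq M^{\b-\gamma}\,C_\gamma^\gamma.
\end{equation*}
As $\b-\gamma<0$, this tends to zero as $M\to\infty$, so I would fix $M$ large enough that this contribution too is at most $\tfrac{1}{4} C_\b^\b$.

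Subtracting both tail contributions from the total then leaves
\begin{equation*}
\int_{\{\e\leq|f|\leq M\}}|f|^\b\,dx \geq C_\b^\b - \tfrac{1}{4} C_\b^\b - \tfrac{1}{4} C_\b^\b = \tfrac{1}{2} C_\b^\b.
\end{equation*}
On this middle set the integrand is at most $M^\b$, so I would conclude that the measure of $\{\e\leq|f|\leq M\}$, and a fortiori that of $\{|f|\geq\e\}$, is at least $C_\b^\b/(2M^\b)=:C_0>0$. Since $\e$, $M$, and hence $C_0$ depend only on the fixed constants $\al,\b,\gamma,C_\al,C_\b,C_\gamma$, this yields the claim.

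Honestly there is no deep obstacle here: the whole argument is a quantitative interpolation combined with Chebyshev's inequality, and the only care needed is in the bookkeeping of the two thresholds. The point worth checking is that the tail bounds are obtained independently of one another, so there is no circularity in fixing $\e$ first and then $M$. The strict inequalities $\al<\b<\gamma$ are exactly what force both tail contributions to vanish, and it is essential that the exponent $\b$ whose norm is bounded below sits strictly between the two exponents whose norms are bounded above.
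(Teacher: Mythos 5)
Your argument is correct. The paper does not actually prove this lemma; it simply cites Fr\"ohlich--Lieb--Loss \cite{FrLiLo}, so there is no internal proof to compare against. Your self-contained argument is the standard one (and essentially the one in that reference): split $\R^n$ into $\{|f|<\e\}$, $\{\e\le|f|\le M\}$, $\{|f|>M\}$, kill the first tail using $\al<\b$ and the $L^\al$ bound, kill the second using $\b<\gamma$ and the $L^\gamma$ bound, and then apply Chebyshev on the middle set where the integrand is at most $M^\b$. The two threshold choices are indeed independent, the resulting $\e$ and $C_0=C_\b^\b/(2M^\b)$ depend only on $\al,\b,\gamma,C_\al,C_\b,C_\gamma$ as required, and the hypothesis $\al>1$ is not even needed (any $0<\al<\b<\gamma<\infty$ would do). The only cosmetic remark is that your middle set is $\{\e\le|f|\le M\}$ while the tails are $\{|f|<\e\}$ and $\{|f|>M\}$, so the three sets do partition $\R^n$ and the subtraction step is legitimate.
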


\begin{lemma}
Suppose $\{f_k\}\subset H^2(\R^n)$ is a uniformly bounded sequence and suppose there exist some fixed $\e,C_0>0$ such
that the Lebesgue measure of $\{x\in\R^n: |f_k(x)|\geq \e\}$ is at least $C_0$ for each $k$.  Then there exists a
sequence of points $\{x_k\}\subset \R$ such that $F_k(y)= f_k(y+x_k)$ has a subsequence which converges weakly in $H^2$
to some nonzero function $F\in H^2$.
\end{lemma}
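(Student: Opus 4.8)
The plan is to read this as a concentration lemma in the spirit of Lieb: the hypotheses prevent the $L^2$-mass of the $f_k$ from spreading out too thinly, so after translating each $f_k$ we can trap a fixed amount of mass inside a fixed ball, and then weak $H^2$-compactness together with the local compact Sobolev embedding produce a nonzero weak limit. First I would record the two quantitative facts that drive everything. Writing $M=\sup_k\|f_k\|_{H^2}<\infty$ and $A_k=\{x\in\R^n:|f_k(x)|\geq\e\}$, the hypothesis $|A_k|\geq C_0$ gives, for every exponent $q\geq1$, the uniform lower bound $\|f_k\|_{L^q}^q\geq\e^q|A_k|\geq\e^q C_0$.

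The heart of the argument is to upgrade this into a uniform lower bound on the local mass
\[
m_k:=\sup_{y\in\R^n}\int_{B_1(y)}|f_k|^2\,dx .
\]
Fix $q$ with $2<q<\tfrac{2n}{n-4}$ (any $q>2$ works when $n\leq4$, where $H^2\hookrightarrow L^\infty$), and cover $\R^n$ by unit balls $\{B_1(y_j)\}$ of bounded overlap $N=N(n)$. On each ball I interpolate $L^q$ between $L^2$ and the critical exponent $2^{**}=\tfrac{2n}{n-4}$ and invoke the Sobolev inequality $\|f_k\|_{L^{2^{**}}(B_1(y_j))}\leq C\|f_k\|_{H^2(B_1(y_j))}$, obtaining
\[
\|f_k\|_{L^q(B_1(y_j))}^q\leq\|f_k\|_{L^2(B_1(y_j))}^{q(1-\la)}\,\bigl(C\|f_k\|_{H^2(B_1(y_j))}\bigr)^{q\la},
\]
where $\la\in(0,1)$ is the interpolation exponent. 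Choosing $q$ so that $q\la=2$ (explicitly $q=2+\tfrac8n$), summing over $j$, pulling out $\sup_j\|f_k\|_{L^2(B_1(y_j))}^{q-2}\leq m_k^{(q-2)/2}$, and bounding the remaining factor by $\sum_j\|f_k\|_{H^2(B_1(y_j))}^2\leq N\|f_k\|_{H^2}^2\leq NM^2$, I arrive at
\[
\e^q C_0\leq\|f_k\|_{L^q}^q\leq\sum_j\|f_k\|_{L^q(B_1(y_j))}^q\leq C\,m_k^{(q-2)/2}\,M^2 .
\]
Since the left-hand side is a fixed positive constant, this forces $m_k\geq\de$ for some $\de>0$ independent of $k$.

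With the local mass bounded below uniformly, the rest is routine. For each $k$ I would select $x_k$ with $\int_{B_1(x_k)}|f_k|^2\,dx\geq\de/2$ and set $F_k(y)=f_k(y+x_k)$; since translation preserves the $H^2$-norm, the sequence $\{F_k\}$ is still bounded in $H^2$ and satisfies $\int_{B_1(0)}|F_k|^2\,dx\geq\de/2$ for every $k$. Boundedness in the Hilbert space $H^2$ yields a subsequence with $F_k\rightharpoonup F$ weakly in $H^2$, and the Rellich--Kondrachov theorem, in the form of the compact embedding $H^2(B_1(0))\hookrightarrow L^2(B_1(0))$, upgrades this to strong convergence in $L^2(B_1(0))$ along the subsequence. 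Passing to the limit in the mass bound gives $\int_{B_1(0)}|F|^2\,dx\geq\de/2>0$, so $F\neq0$, which is exactly the assertion.

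I expect the only genuine obstacle to be the interpolation-and-summation estimate that produces the uniform lower bound on $m_k$. One has to keep $q$ strictly below the critical Sobolev exponent, tune the interpolation so that a strictly positive power of the local $L^2$-mass is extracted while the complementary power is absorbed into the uniformly bounded $H^2$-norm, and keep track of the finite overlap of the cover. Once that one quantitative estimate is secured, the weak-compactness and compact-embedding steps are completely standard and, notably, dimension-independent.
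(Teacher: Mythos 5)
Your proposal is correct, and it is worth noting that the paper itself offers no proof of this lemma at all: it simply cites Lieb's paper (and remarks that Lieb's $H^1$ argument carries over verbatim to $H^2$). What you have written is a complete, self-contained proof along the lines of Lions' ``vanishing'' lemma: the hypotheses give a uniform lower bound on $\|f_k\|_{L^q}$, the interpolation-plus-covering estimate with $\lambda q=2$ (so $q=2+8/n$ for $n>4$) converts that into a uniform lower bound on the concentrated local mass $m_k$, and translation plus Rellich--Kondrachov on the unit ball finishes the job. All the quantitative steps check out: $q=2+8/n$ is indeed strictly below $2n/(n-4)$, the exponent bookkeeping $(1-\lambda)q=q-2$ is right, and passing the lower bound $\int_{B_1(0)}|F_k|^2\geq\de/2$ to the limit via compact embedding correctly rules out $F\equiv 0$. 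Lieb's own argument is slightly different in flavor --- he works directly with the measures $|A_k\cap Q_j|$ on a lattice of unit cubes and a Sobolev--Chebyshev bound on each cube, rather than routing through a global $L^q$ interpolation inequality --- but both proofs hinge on the same mechanism (a bounded $H^2$ sequence whose superlevel sets have uniformly positive measure cannot vanish locally everywhere), so yours is a legitimate and arguably more standard alternative. The only blemish is the parenthetical claim that $H^2\hookrightarrow L^\infty$ for $n\leq 4$: this fails at $n=4$, though your argument survives there by interpolating against $L^r$ for a large finite $r$ (giving an admissible $q=4-4/r\in(2,4)$), so the conclusion ``any $q>2$ works when $n\leq 4$'' should be read with that adjustment.
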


\begin{proof_of}{Theorem \ref{new}}
We divide the argument into several steps.

\Step{1} (Weak convergence). Let $\{z_k\}\subset H^2$ be a minimizing sequence for $m_c$ so that
\begin{equation}\label{mseq}
0\nequiv z_k\in H^2, \quad\quad P_c(z_k)\leq 0, \quad\quad \lim_{k\to\infty} J_c(z_k) = m_c.
\end{equation}
Since $J_c$ is non-negative by definition \eqref{Jc}, one has $0\leq m_c\leq J_c(u)=I_c(u)$, where $u$ is the function of
Lemma \ref{usp}.  In particular, $m_c$ is bounded and the same is true for each
\begin{equation*}
J_c(z_k)= \frac{\mu-2}{2\mu} \,||z_k||^2 + \int_{\R^n} G(z_k) \:dx.
\end{equation*}
When $\mu>2$, this already implies that $\{z_k\}$ is uniformly bounded in $H^2$, as $G\geq 0$ by (A3).  When $\mu=2$, we reach the
same conclusion using our additional assumption in (A3), as
\begin{align*}
||z_k||^2 &= P_c(z_k) - \int_{\R^n} z_k f(z_k) \:dx \\
&\leq C_2^{-1} ||z_k||_{L^\infty} \int_{\R^n} G(z_k)\:dx \\
&\leq C||z|| \cdot J_c(z_k)
\end{align*}
by Sobolev embedding when $n=1,2,3$.  This shows that $\{z_k\}$ is uniformly bounded in $H^2$ for each of the two cases considered in
Theorem \ref{new}.  Moreover, \eqref{use1} gives
\begin{align*}
0\geq P_c(z_k) \geq ||z_k||^2 - C_1 ||z_k||_{L^{q+1}}^{q+1} \geq C_3||z_k||_{L^{q+1}}^2 - C_1||z_k||_{L^{q+1}}^{q+1}
\end{align*}
so the norms $||z_k||_{L^{q+1}}$ are uniformly bounded from below.  Using the previous two lemmas and replacing $\{z_k\}$ by a
subsequence, we conclude that there is a nonzero function $z\in H^2$ such that $z_k\to z$ weakly in $H^2$.  By compactness, we may
additionally assume
\begin{equation}\label{stro}
z_k\to z \quad\text{strongly in $L^{p+1}_\text{loc}(\R^n)$.}
\end{equation}

\Step{2} (Two limits). To show that $z$ is a minimizer for $m_c$, we shall need to know that
\begin{align}
\lim_{k\to\infty} P_c(z_k) - P_c(z_k - z) - P_c(z) &= 0 \label{lim1}, \\
\lim_{k\to\infty} J_c(z_k) - J_c(z_k - z) - J_c(z) &= 0 \label{lim2}.
\end{align}
In order to prove these two facts, let us first recall our definitions
\begin{align*}
P_c(z) = ||z||^2 + \int_{\R^n} zf(z) \:dx, \quad\quad
&J_c(z)= \frac{\mu-2}{2\mu} \,||z||^2 + \int_{\R^n} F(z) - \frac{1}{\mu} \,zf(z) \:dx.
\end{align*}
Since $z_k\to z$ weakly in $H^2$ by above, the expression
\begin{equation*}
||z_k||^2 - ||z_k - z||^2 - ||z||^2 = 2(z, z_k-z)
 \end{equation*}
does go to zero as $k\to\infty$.  To prove \eqref{lim1} and \eqref{lim2}, it remains to check that
\begin{equation}\label{che}
\lim_{k\to\infty} \int_{\R^n} H(z_k) - H(z_k -z) -H(z) \:dx = 0
\end{equation}
when $H(u)= F(u)$ or $H(u)=uf(u)$.  In either of these two cases, \eqref{f1} gives
\begin{equation*}
|H(u)-H(v)|\leq C(|u|^p +|v|^p)  \,|u-v| \quad\text{for all $u,v\in \R$.}
\end{equation*}
Given an arbitrary set $\Omega\subset \R^n$, we then easily find that
\begin{align}\label{Hes}
\int_\Omega |H(u)-H(v)| \:dx
&\leq C||u-v||_{L^{p+1}(\Omega)} \cdot \Bigl( ||u||_{L^{p+1}(\R^n)}^p + ||v||_{L^{p+1}(\R^n)}^p \Bigr) \notag \\
&\leq C||u-v||_{L^{p+1}(\Omega)} \cdot \Bigl( ||u||^p + ||v||^p \Bigr)
\end{align}
by H\"older's inequality and Sobolev embedding.  We now use this fact to prove \eqref{che}.

Let $\e>0$ be given and fix some compact set $A\subset \R^n$ which is large enough that
\begin{equation}\label{ext}
||z||_{H^2(\R^n \backslash A)} \leq \e.
\end{equation}
Applying our general estimate \eqref{Hes} for various choices of $u,v$ and $\Omega$, we get
\begin{align*}
\int_A |H(z_k) -H(z)| + |H(z_k-z)| \:dx
&\leq C||z_k-z||_{L^{p+1}(A)} \cdot \Bigl( ||z_k||^p + ||z||^p \Bigr), \\
\int_{\R^n \backslash A} |H(z_k) -H(z_k-z)| + |H(z)| \:dx
&\leq C||z||_{L^{p+1}(\R^n \backslash A)} \cdot \Bigl( ||z_k||^p + ||z||^p \Bigr).
\end{align*}
As the norms $||z_k||$ are uniformly bounded by Step 1, this actually gives
\begin{align*}
\int_{\R^n} |H(z_k) - H(z_k -z) -H(z)| \:dx
&\leq C||z_k-z||_{L^{p+1}(A)} + C||z||_{H^2(\R^n \backslash A)}\\
&\leq C\e
\end{align*}
for large $k$ by \eqref{stro} and \eqref{ext}.  Since $\e>0$ was arbitrary, the desired \eqref{che} follows.

\Step{3} (Existence of a minimizer). We shall now show that $P_c(z)\leq 0$ and that $J_c(z)=m_c$. Suppose $P_c(z)>0$ for
the sake of contradiction.  Since $P_c(z_k)\leq 0$ for all $k$, we get
\begin{equation}\label{Pcla}
P_c(z_k-z) < 0 \quad\text{for all large enough $k$}
\end{equation}
by \eqref{lim1}.  Using the definition of $m_c$ together with \eqref{mseq} and \eqref{lim2}, we conclude that
\begin{equation*}
J_c(z)= \lim_{k\to\infty} J_c(z_k) - J_c(z_k-z) = m_c - \lim_{k\to\infty} J_c(z_k-z) \leq 0.
\end{equation*}
Since $J_c\geq 0$ by definition, however, equality must hold in the last inequality and thus
\begin{equation*}
\lim_{k\to\infty} J_c(z_k-z) = m_c.
\end{equation*}
Together with \eqref{Pcla}, this means $\{z_k-z\}$ becomes a minimizing sequence for $m_c$, if a finite number of its terms are
ignored.  By Step 1, such a sequence must have a nonzero weak limit up to a subsequence, a contradiction that gives $P_c(z)\leq 0$.
Next, we note that
\begin{equation*}
J_c(u)= \frac{\mu-2}{2\mu} \,||u||^2 + \int_{\R^n} G(u) \:dx
\end{equation*}
is weakly lower semi-continuous by Fatou's lemma and (A3).  In particular,
\begin{equation*}
J_c(z) \leq \liminf_{k\to\infty} J_c(z_k) = m_c
\end{equation*}
and so equality must hold in the last inequality because $P_c(z)\leq 0$ by above.

\Step{4} (Ground states). Since a minimizer is now known to exist for $m_c$, Lemma \ref{2min} shows that one
exists for $d_c$.  To see that every minimizer for $d_c$ is a ground state, suppose
\begin{equation*}
I_c(w) = d_c = \min \{ I_c(z) : 0\nequiv z\in H^2, \quad P_c(z)=0\}.
\end{equation*}
Then there exists a Lagrange multiplier $\la\in\R$ such that $I_c'(w)= \la P_c'(w)$ and thus
\begin{equation}\label{eq1}
\la \br{P_c'(w),w} = \br{I_c'(w),w} = P_c(w) = 0
\end{equation}
by \eqref{Pc}.  Once we now compute the Fr\'echet derivative of \eqref{Pc}, we find that
\begin{align}
\br{P_c'(w),w}
&= 2||w||^2 + \int_{\R^n} wf(w) \:dx + \int_{\R^n} w^2f'(w) \:dx \notag \\
&= 2P_c(w) + \int_{\R^n} w^2f'(w) - wf(w) \:dx \notag \\
&= \int_{\R^n} w^2f'(w) - wf(w) \:dx. \label{strict}
\end{align}
In view of our assumption (A4), the integral is non-positive and can only be zero, if
\begin{equation*}
\int_{\R^n} wf(w) \:dx = P_c(w) - ||w||^2 = -||w||^2
\end{equation*}
itself is zero.  Since that is not the case, however, the integral in \eqref{strict} is strictly negative, so $\la=0$ by
\eqref{eq1} and $I_c'(w)= \la P_c'(w)=0$ by above; in other words, $w\in S_c$ is a nontrivial solution of \eqref{pde}. Given any
other nontrivial solution $u\in S_c$, we easily get
\begin{equation*}
P_c(u)= \br{I_c'(u),u}=0\so d_c\leq I_c(u)
\end{equation*}
by \eqref{Pc} and \eqref{dc}.  Since $w$ is a minimizer for $d_c$, this actually shows that $w\in G_c$.

\Step{5} (End of proof). We now show that each ground state $w\in G_c$ is a minimizer for $d_c$. According to our
definition \eqref{Gc}, we have $P_c(w)=\br{I_c'(w),w}= 0$ and so
\begin{equation*}
I_c(w) \geq d_c = I_c(u)
\end{equation*}
for any minimizer $u$.  As $w\in G_c$, on the other hand, we also have $I_c(w)\leq I_c(u)$ by \eqref{Gc}.  In particular,
equality holds in the last inequality and $w$ itself is a minimizer for $d_c$.
\end{proof_of}

\end{document}